\documentclass{amsart}
\usepackage{amssymb}
\usepackage{graphicx}

\newtheorem{theorem}{Theorem}
\newtheorem{cor}{Corollary}
\newtheorem{lemma}{Lemma}

\newtheorem{rem}{Remark}
\newtheorem{conj}{Conjecture}

\newcommand{\es}{{\mathcal S}}

\newcommand{\D}{{\mathbb D}}

\newcommand{\real}{{\operatorname{Re}\,}}

\def\be{\begin{equation}}
\def\ee{\end{equation}}

\newcommand{\bee}{\begin{enumerate}}
\newcommand{\eee}{\end{enumerate}}

\newcommand{\blem}{\begin{lem}}
\newcommand{\elem}{\end{lem}}
\newcommand{\bthm}{\begin{thm}}
\newcommand{\ethm}{\end{thm}}
\newcommand{\bcor}{\begin{cor}}
\newcommand{\ecor}{\end{cor}}
\newcommand{\beg}{\begin{example}}
\newcommand{\eeg}{\end{example}}
\newcommand{\begs}{\begin{examples}}
\newcommand{\eegs}{\end{examples}}
\newcommand{\bdefe}{\begin{defin}}
\newcommand{\edefe}{\end{defin}}
\newcommand{\bprob}{\begin{prob}}
\newcommand{\eprob}{\end{prob}}
\newcommand{\bei}{\begin{itemize}}
\newcommand{\eei}{\end{itemize}}

\newcommand{\bcon}{\begin{conj}}
\newcommand{\econ}{\end{conj}}
\newcommand{\bcons}{\begin{conjs}}
\newcommand{\econs}{\end{conjs}}
\newcommand{\bprop}{\begin{propo}}
\newcommand{\eprop}{\end{propo}}
\newcommand{\br}{\begin{rem}}
\newcommand{\er}{\end{rem}}
\newcommand{\brs}{\begin{rems}}
\newcommand{\ers}{\end{rems}}
\newcommand{\bo}{\begin{obser}}
\newcommand{\eo}{\end{obser}}
\newcommand{\bos}{\begin{obsers}}
\newcommand{\eos}{\end{obsers}}
\newcommand{\bpf}{\begin{pf}}
\newcommand{\epf}{\end{pf}}
\newcommand{\ba}{\begin{array}}
\newcommand{\ea}{\end{array}}
\newcommand{\beq}{\begin{eqnarray}}
\newcommand{\beqq}{\begin{eqnarray*}}
\newcommand{\eeq}{\end{eqnarray}}
\newcommand{\eeqq}{\end{eqnarray*}}

\title[Different approach to certain problems]{Different approach to certain problems in univalent function theory}

\author[M. Obradovi\'{c}]{Milutin Obradovi\'{c}}
\address{Department of Mathematics,
Faculty of Civil Engineering, University of Belgrade,
Bulevar Kralja Aleksandra 73, 11000, Belgrade, Serbia}
\email{obrad@grf.bg.ac.rs}

\author[N. Tuneski]{Nikola Tuneski}
\address{Department of Mathematics and Informatics, Faculty of Mechanical Engineering, Ss. Cyril and Methodius
University in Skopje, Karpo\v{s} II b.b., 1000 Skopje, Republic of North Macedonia.}
\email{nikola.tuneski@mf.edu.mk}

\subjclass[2020]{30C45, 30C50, 30C55}
\keywords{univalent functions, inverse functions, logarithmic coefficient, coefficient difference}

\dedicatory{Dedicated to the 70$^{th}$ birthday anniversary of Teodor Bulboac\v{a}, our great friend and inspiring colleague.}

\begin{document}

\begin{abstract}
Using different approach we consider some problems for the class $\mathcal{S}_{ex}^{\star}$ defined by the condition $\frac{zf'(z)}{f(z)}\prec e^{\alpha z}$, $0<\alpha\leq1.$ Our method is more simple than that used by Das and Sarkar in their work: \textit{Sharp coefficient estimates for the exponential starlike class $S^\star_{ex}$} (\cite{DS26}).
\end{abstract}

\maketitle

\section{Introduction and preliminaries}

\medskip

As usual, let $\mathcal{A}$ be the class of functions $f$ analytic in the open unit disc $\D=\{z:|z|<1\}$ of the form
\be\label{eq1}
f(z)=z+a_2z^2+a_3z^3+\cdots,
\ee
and let $\mathcal{S}$, $\mathcal{S}\subset \mathcal{A}$, be its subclass consisting of functions that are univalent (one-to-one and onto) in $\D$.

\medskip

Further, let $\Omega$ denote the class of (Schwarz) functions $\omega$ analytic  in $\D$ with
\be\label{eq2}
\omega(z)=c_{1}z +c_{2}z^{2}+\cdots,
\ee
and
$|\omega(z)|<1,\, z\in\D$.

\medskip

In \cite{DS26} the authors considered the class $\mathcal{S}_{ex}^{\star}$ of functions normalized in $\D$ satisfying the condition
\[\frac{zf'(z)}{f(z)}\prec e^{\alpha z},\quad 0<\alpha\leq1,\]
i.e.,
\be\label{eq4}
\frac{zf'(z)}{f(z)}= e^{\alpha \omega(z)},\quad 0<\alpha\leq1, z\in \D,
\ee
where $\omega \in \Omega$. Since $\real e^{\alpha z}>0$ for $0<\alpha\leq1$, $z\in \D,$ the functions from $\mathcal{S}_{ex}^{\star}$ are starlike univalent on the unit disc.
If $f$ and $\omega$ are given by \eqref{eq1} and \eqref{eq2}, then by \eqref{eq4}, after comparing the coefficients,
for $f\in\mathcal{S}_{ex}^{\star}$ we obtain
\begin{equation}\label{eq5}
\begin{split}
a_{2}&=\alpha c_{1},\\
a_{3}&=\frac{1}{2}\alpha c_{2}+\frac{3}{4}\alpha^{2}c_{1}^{2},\\
a_{4}&=\frac{1}{3}\alpha c_{3}+\frac{5}{6}\alpha^{2}c_{1}c_{2}+\frac{17}{36}\alpha^{3}c_{1}^{3}.
\end{split}
\end{equation}

\medskip

For $f\in \mathcal{S}$ the logarithmic coefficients $\gamma_{n}$, $n=1,2,\ldots$, are defined by
\be\label{eq6}
F_f(z) := \log\frac{f(z)}{z}=2\sum_{n=1}^\infty \gamma_{n} z^{n}.
\ee

From the relations \eqref{eq1} and \eqref{eq6}, after  equating the coefficients for initial logarithmic
coefficients we obtain:
\begin{equation}\label{eq7}
\begin{split}
 \gamma_{1}&=\frac{a_{2}}{2},\\
 \gamma_{2}&=\frac{1}{2}\left(a_3-\frac{1}{2}a_{2}^{2}\right), \\
 \gamma_{3}&=\frac{1}{2}\left(a_4-a_2a_3+\frac13a_2^3\right).
\end{split}
\end{equation}

\medskip

Relatively little exact information is known about the coefficients. The natural conjecture $|\gamma_n|\le1/n$,
inspired by the Koebe function (whose logarithmic coefficients are $1/n$) is false even in order of magnitude (see
Duren \cite[Section 8.1]{duren}).
For the class $\es$ the sharp estimates of single logarithmic coefficients S are known only for $\gamma_1$ and
$\gamma_2$, namely,
\[|\gamma_1|\le1\quad\mbox{and}\quad |\gamma_2|\le \frac12+e^{-2}=0.635335\ldots,\]
and are unknown for $n\ge3$. For estimates of $|\gamma_{3}|$ and $|\gamma_{4}|$ we can find in \cite{106} and
\cite{114}.

\medskip

Further, due to the famous Koebe's 1/4 theorem, a function $f\in\es$ given by \eqref{eq1} has its inverse at least on
the disc with radius 1/4, which is also analytic on this disc, with an expansion
\[f^{-1}(w) = w+A_2w^2+A_3w^3+\cdots.\]
Using the identity $f(f^{-1}(w))=w$, and after comparing the appropriate coefficients, we obtain
\begin{equation}\label{eq9}
\begin{array}{l}
A_{2}=-a_{2}, \\
A _{3}=-a_{3}+2a_{2}^{2} , \\
A_{4}= -a_{4}+5a_{2}a_{3}-5a_{2}^{3}.
\end{array}
\end{equation}

\medskip

Respectively, the logarithmic coefficients $\Gamma_{n}$ of the inverse function of a function $f$ from $ \mathcal{S}$
are defined by:
\[F_{f^{-1}}(z):=\log\frac{f^{-1}(z)}{z}=2\sum_{n=1}^\infty \Gamma_n z^n.\]
Applying the relations \eqref{eq5}, \eqref{eq7} and \eqref{eq9}, we obtain
\begin{equation}\label{eq10}
\begin{split}
 \Gamma_{1}&=\frac{A_{2}}{2}=-\frac{1}{2}\alpha c_{1},\\
 \Gamma_{2}&=\frac{1}{2}\left(A_3-\frac{1}{2}A_{2}^{2}\right)=-\frac{1}{4}\alpha c_{2}+\frac{3}{8}\alpha^{2}c_{1}^{2},\\
 \Gamma_{3}&=\frac{1}{2}\left(A_4-A_2A_3+\frac13A_2^3\right)=
 -\frac{1}{6}\alpha c_{3}+\frac{7}{12}\alpha^{2}c_{1}c_{2}-\frac{29}{72}\alpha^{3}c_{1}^{3}.
\end{split}
\ee

\medskip

In their paper \cite {DS26} for the functions of the class $\mathcal{S}_{ex}^{\star}$, the authors give some results concerning the estimates of coefficients $\Gamma_{n}$, $n=1,2,3$, as well as estimation of $|\Gamma_{2}|-|\Gamma_{1}|$ and of the Hankel determinant $|H_{2,1}(F_{f^{-1}/2})|$. For obtaining the results they used a property of Caratheodory functions, i.e., of functions from the class $\mathcal{R}$ that are analytic in $\D$ and are of type $p(z)= 1+p_{1}z +p_{2}z^{2}+\cdots$ with $\real p(z)>0$, $z\in \D$. Although there exists one-to-one correspondence between classes $\Omega $ and $\mathcal{R}$ given by $\omega(z)=\frac{p(z)-1}{p(z)+1}$, we find that approach with properties of $\Omega $ class is simpler. In that sense, for our approach we need the next two lemmas from \cite{126}. More details and some previous results can be found in \cite{128}.

\begin{lemma}\label{lem-1}
If  $\omega\in \Omega $ is given by \eqref{eq2}, then
$$|c_1|\leq 1,\quad |c_2|\leq 1-|c_1|^2,\quad \mbox{and}\quad |c_3|\leq 1-|c_1|^2-\frac{|c_2|^2}{1+|c_1|}.$$
\end{lemma}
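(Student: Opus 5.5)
The plan is to apply the Schur algorithm, i.e.\ the Schwarz lemma iterated twice, to $\omega$.

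\emph{Step 1.} Since $\omega(0)=0$ and $|\omega|<1$, the Schwarz lemma shows that $\omega_1(z)=\omega(z)/z=c_1+c_2z+c_3z^2+\cdots$ is analytic in $\D$ with $|\omega_1|\le 1$. In particular $|c_1|=|\omega_1(0)|\le1$. If $|c_1|=1$, the maximum principle forces $\omega_1\equiv c_1$. Then $c_2=c_3=0$, and all three inequalities hold trivially. So from now on we assume $|c_1|<1$.

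\emph{Step 2.} Define the Schur transform
\[
\omega_2(z)=\frac{1}{z}\cdot\frac{\omega_1(z)-c_1}{1-\overline{c_1}\,\omega_1(z)}.
\]
The Möbius map $w\mapsto (w-c_1)/(1-\overline{c_1}w)$ preserves the closed disc and sends $c_1$ to $0$. Applying the Schwarz lemma again gives that $\omega_2$ is analytic in $\D$ with $|\omega_2|\le1$. Expanding,
\[
\frac{\omega_1-c_1}{1-\overline{c_1}\omega_1}=(c_2z+c_3z^2+\cdots)\bigl(1+\overline{c_1}(c_1+c_2z+\cdots)+\cdots\bigr).
\]
With $d=1-|c_1|^2$, a routine computation gives
\[
\omega_2(z)=\frac{c_2}{d}+\Bigl(\frac{c_3}{d}+\frac{\overline{c_1}\,c_2^2}{d^2}\Bigr)z+\cdots.
\]
Writing $\omega_2(z)=\beta_0+\beta_1 z+\cdots$, we have $\beta_0=c_2/d$ and $\beta_1=c_3/d+\overline{c_1}c_2^2/d^2$.

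\emph{Step 3.} From $|\beta_0|\le1$ we get $|c_2|\le 1-|c_1|^2$. For a self-map of the closed disc, the standard first-coefficient bound (Schwarz--Pick at $0$) gives $|\beta_1|\le 1-|\beta_0|^2$. Hence
\[
|c_3|\le d\bigl(1-|\beta_0|^2\bigr)+\frac{|c_1||c_2|^2}{d}=d-\frac{|c_2|^2}{d}+\frac{|c_1||c_2|^2}{d}=1-|c_1|^2-\frac{|c_2|^2(1-|c_1|)}{1-|c_1|^2}.
\]
Since $(1-|c_1|)/(1-|c_1|^2)=1/(1+|c_1|)$, the last expression equals $1-|c_1|^2-\frac{|c_2|^2}{1+|c_1|}$, which is the third inequality.

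The main obstacle is the coefficient bookkeeping in Step 2, namely getting $\beta_1$ exactly right, including the conjugate $\overline{c_1}$ and the power $d^2$. The degenerate case $|c_1|=1$ must also be handled separately before dividing by $d$.
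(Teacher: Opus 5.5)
Your proof is correct. The paper does not prove Lemma~\ref{lem-1} at all; it imports it, together with Lemma~\ref{lem-2}, from \cite{126}, with \cite{128} as a further reference. So there is no argument in the paper to compare against.

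What you give is the classical self-contained route: two steps of the Schur algorithm. They yield
$c_2=(1-|c_1|^2)\beta_0$ and $c_3=(1-|c_1|^2)\beta_1-\overline{c_1}c_2^2/(1-|c_1|^2)$, with $|\beta_0|\le1$ and $|\beta_1|\le 1-|\beta_0|^2$. The bound on $|c_3|$ then follows from one application of the triangle inequality. I checked $\beta_1=c_3/d+\overline{c_1}c_2^2/d^2$ and the final simplification; both are right. Your separate treatment of $|c_1|=1$ is exactly what is needed before dividing by $d$.

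Relative to the bare citation, your route gives more than the three inequalities. It exposes the structure of $(c_1,c_2,c_3)$, and running the recursion backwards shows every triple satisfying these constraints actually occurs. It also shows when equality holds: the $|c_3|$ bound is attained exactly when $|\beta_1|=1-|\beta_0|^2$ and $d\beta_1$, $-\overline{c_1}c_2^2/d$ have the same direction. This is the kind of information behind the extremal functions the paper quotes from \cite[pp.~135--136]{126}.

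Two cosmetic points:
\begin{enumerate}
\item Your displayed product with the geometric series $\sum_k(\overline{c_1}\omega_1)^k$ is an awkward way to read off $z$-coefficients, since each coefficient becomes an infinite sum. It is cleaner to write $1-\overline{c_1}\omega_1(z)=d\bigl(1-\tfrac{\overline{c_1}c_2}{d}z+O(z^2)\bigr)$ and invert.
\item $\omega_2$ maps $\D$ into the closed disc; it is not a self-map of the closed disc. The bound $|\beta_1|\le1-|\beta_0|^2$ still holds. Either $|\beta_0|<1$, in which case $|\omega_2|<1$ by the maximum principle and Schwarz--Pick applies, or $\omega_2$ is a unimodular constant and $\beta_1=0$.
\end{enumerate}
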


\begin{lemma}\label{lem-2}
If  $\omega\in \Omega $ is given by \eqref{eq2}and if $\Psi(\omega)=|c_{3}+\mu c_{1}c_{2}+\nu c_{1}^{3}|$ ($\mu$ and $\nu$ are real numbers),
then the following sharp estimate $|\Psi(\omega)|\leq \Phi(\mu,\nu)$ holds, where
\[\Phi(\mu,\nu) = \left\{
  \begin{array}{ll}
    1, & \hbox{$(\mu,\nu) \in D_{1} \cup D_{2} \cup \{(2,1)\}$;} \\
    |\nu|, & \hbox{$(\mu,\nu) \in\bigcup_{k=3}^{7} D_{k} $;}\\
    \frac{2}{3}(|\mu|+1)\left(\frac{|\mu|+1}{3(|\mu|+1+\nu)}\right)^{1/2},&\hbox{$(\mu,\nu) \in D_{8} \cup D_{9}$;}\\
    \frac{1}{3}\nu \left(\frac{\mu^{2}-4}{\mu^{2}-4\nu}\right)\left(\frac{\mu^{2}-4}{3(\nu-1)}\right)^{1/2},&
     \hbox{$(\mu,\nu) \in D_{10} \cup D_{11}\setminus\{(2,1)\}$;}\\
    \frac{2}{3}(|\mu|-1)\left(\frac{|\mu|-1}{3(|\mu|-1-\nu)}\right)^{1/2},&\hbox{$(\mu,\nu) \in D_{12}$}.
  \end{array}
\right.\]
and
\begin{eqnarray*}
D_{1}&=& \left\{(\mu,\nu): |\mu| \leq \frac{1}{2}, \, -1 \leq \nu \leq 1 \right\},\\
D_{2}&=& \left\{(\mu,\nu): \frac{1}{2} \leq |\mu| \leq 2, \, \frac{4}{27}(|\mu|+1)^3 - (|\mu|+1) \leq \nu \leq 1 \right\},\\
D_{3}&=& \left\{(\mu,\nu): |\mu|\leq\frac{1}{2},\, \nu\leq -1 \right\},\\
D_{4}&=& \left\{(\mu,\nu): |\mu|\geq\frac{1}{2}, \, \nu\leq-\frac{2}{3}(|\mu|+1) \right\},\\
D_{5}& =& \left\{(\mu,\nu):|\mu| \leq 2, \, \nu \geq 1 \right\},\\
D_{6}&=& \left\{(\mu,\nu): 2\leq|\mu| \leq 4, \, \nu \geq \frac{1}{12}(\mu^{2}+8) \right\},\\
D_{7}&=& \left\{(\mu,\nu): |\mu|\geq 4, \, \nu \geq \frac{2}{3}(|\mu|-1) \right\},\\
D_{8}&=& \left\{(\mu,\nu): \frac{1}{2}\leq|\mu| \leq 2, \, -\frac{2}{3}(|\mu|+1)\leq \nu \leq \frac{4}{27}(|\mu|+1)^3 - (|\mu|+1)\right\},\\
D_{9}&=& \left\{(\mu,\nu): |\mu| \geq 2,\, -\frac{2}{3}(|\mu|+1)\leq \nu \leq \frac{2|\mu|(|\mu|+1)}{\mu^{2}+2|\mu|+4}\right\},\\
D_{10}&=& \left\{(\mu,\nu): 2\leq|\mu| \leq 4, \, \frac{2|\mu|(|\mu|+1)}{\mu^{2}+2|\mu|+4}\leq \nu \leq
\frac{1}{12}(\mu^{2}+8) \right\},
\end{eqnarray*}

\begin{eqnarray*}
D_{11}&=& \left\{(\mu,\nu): |\mu|\geq 4, \, \frac{2|\mu|(|\mu|+1)}{\mu^{2}+2|\mu|+4}\leq \nu \le \frac{2|\mu|(|\mu|-1)}{\mu^{2}-2|\mu|+4}\right\},\\
D_{12}&=& \left\{(\mu,\nu): |\mu|\geq 4, \, \frac{2|\mu|(|\mu|-1)}{\mu^{2}-2|\mu|+4}\leq \nu\leq
\frac{2}{3}(|\mu|-1) \right\}.
\end{eqnarray*}
\end{lemma}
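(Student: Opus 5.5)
Lemma~\ref{lem-2} is the Prokhorov--Szynal estimate. In the paper it is simply quoted from \cite{126}. If I were to prove it, I would reduce it to an explicit finite-dimensional extremal problem through the Schur parametrization of the initial coefficients of a Schwarz function.

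\emph{Reduction.} By the Schur algorithm, for $\omega\in\Omega$ there are numbers $\zeta_1,\zeta_2,\zeta_3\in\overline{\D}$ such that
\[
c_1=\zeta_1,\qquad c_2=(1-|\zeta_1|^2)\zeta_2,\qquad c_3=(1-|\zeta_1|^2)(1-|\zeta_2|^2)\zeta_3-(1-|\zeta_1|^2)\overline{\zeta_1}\zeta_2^2 .
\]
Conversely, every such triple is realized by some $\omega\in\Omega$, namely a finite Blaschke product of degree at most $3$. Lemma~\ref{lem-1} is an immediate consequence of this description.

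The quantity $\Psi$ is invariant under the rotation $\omega(z)\mapsto e^{-i\theta}\omega(e^{i\theta}z)$, which multiplies $c_k$ by $e^{i(k-1)\theta}$. So we may assume $\zeta_1=r\in[0,1]$.

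The variable $\zeta_3$ enters linearly with coefficient $(1-r^2)(1-|\zeta_2|^2)\ge0$. Choosing its argument optimally gives
\[
\max\Psi=\max_{0\le r\le1,\;\zeta_2=se^{it}}\Bigl\{(1-r^2)(1-s^2)+\bigl|\nu r^3+\mu r(1-r^2)se^{it}-r(1-r^2)s^2e^{2it}\bigr|\Bigr\}.
\]

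\emph{Optimization.} Next I optimize over the phase $t$. The modulus is a trigonometric polynomial in $t$, and its square is quadratic in $\cos t$. Since $\mu,\nu$ are real, the maximum occurs either at $t\in\{0,\pi\}$ or at an interior critical point where $\cos t$ solves a linear equation. At $t=0,\pi$ the expression becomes real, namely $|\nu r^3\pm\mu r(1-r^2)s-r(1-r^2)s^2|$. After that, we have an elementary two-variable problem on $[0,1]^2$.

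The boundary points give the first candidate values:
\begin{itemize}
\item $r=0$ gives $1$, attained by $\omega=z^3$;
\item $r=1$ gives $|\nu|$, attained by $\omega=z$.
\end{itemize}
Interior critical points in $s$ (for instance $s=|\mu|/2$-type solutions) and then in $r$ give the remaining three algebraic expressions. The factors $\bigl(\cdots\bigr)^{1/2}$ appear from maximizing a cubic of the form $Ar-Br^3$. The extremal functions are the corresponding degree-$2$ and degree-$3$ Blaschke products.

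Finally, I compare the candidate maxima. The boundaries of the regions $D_1,\dots,D_{12}$ are exactly the curves where two candidates coincide, or where an interior critical point leaves the admissible range ($s\le1$, $r\le1$, $|\cos t|\le1$). Examples are $\nu=\frac{4}{27}(|\mu|+1)^3-(|\mu|+1)$ and $\nu=\frac{2|\mu|(|\mu|\pm1)}{\mu^2\mp2|\mu|+4}$. The symmetry $\mu\mapsto-\mu$, obtained from $\zeta_2\mapsto-\zeta_2$, reduces the work to $\mu\ge0$.

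\emph{Main obstacle.} The hard part is this last case analysis: proving, region by region, that the claimed candidate dominates all the others. This is especially delicate in $D_{10}\cup D_{11}$, where the optimum has a non-real phase $t$. I would handle it by first fixing $r$ and maximizing in $(s,t)$ in closed form. The result is a one-variable function of $r$, and I would then study its monotonicity and concavity separately on each candidate branch.
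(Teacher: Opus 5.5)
The paper does not prove Lemma~\ref{lem-2}; it is imported from \cite{126}, so there is no argument in the text to compare yours with. On its own terms your reduction is sound. The Schur formula for $c_3$ is right, and Lemma~\ref{lem-1} does follow from it. The optimal choice of $\zeta_3$ gives exactly your expression. The squared modulus is a quadratic in $\cos t$ with leading coefficient $-4\nu r^4(1-r^2)s^2$, so a non-real optimal phase is possible only for $\nu>0$.

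However, the content of the lemma is the case analysis, and you defer all of it. As written you have shown only that the values $1$ and $|\nu|$ are attained (by $z^3$ and $z$). None of the upper bounds, none of the three nontrivial formulas and none of the region boundaries is derived, so this is a plan rather than a proof.

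Two statements in the plan are also wrong and would misdirect that analysis.

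First, the rotation. The map $\omega(z)\mapsto e^{-i\theta}\omega(e^{i\theta}z)$ leaves $c_1$ unchanged and sends $\Psi$ to $|e^{2i\theta}c_3+e^{i\theta}\mu c_1c_2+\nu c_1^3|$, which is not invariant. The right normalization is $\omega(z)\mapsto\omega(e^{i\theta}z)$, which multiplies $c_k$ by $e^{ik\theta}$ and $\Psi$ by $e^{3i\theta}$; this is what lets you take $\zeta_1=r\ge0$.

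Second, the three nontrivial values do not arise from interior critical points in $s$. All of them lie on $s=1$, where the $\zeta_3$-term vanishes and $\omega$ is $z$ times a M\"obius map. Take $\mu\ge0$.
The phase $t=\pi$ gives $\max_r|(\mu+1)r-(\mu+1+\nu)r^3|$, whose interior maximum is the $D_8\cup D_9$ value.
The phase $t=0$ gives $\max_r|(\mu-1)r-(\mu-1-\nu)r^3|$, whose interior maximum is the $D_{12}$ value.
The interior critical point in $\cos t$ gives $(1+(\nu-1)r^2)\bigl(r^2+\mu^2(1-r^2)/(4\nu)\bigr)^{1/2}$. This is maximized at $r^2=\frac{3\mu^2-2(\mu^2+2)\nu}{3(\nu-1)(4\nu-\mu^2)}$, the $c_1^2$ in case (d) of the proof of Theorem~\ref{th1}, and the maximum is the $D_{10}\cup D_{11}$ value.

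Interior critical points in $s$ (on the real-phase branches, $s=r\mu/(2(1\pm r))$) are competitors that must be shown to lose. They cannot be discarded at fixed $r$. For $(\mu,\nu)=(1,0)\in D_2$ and $r=0.1$, the maximum over $(s,t)$ is at $t=\pi$, $s=1/18$, with value $0.99(1+1/360)$. This is larger than anything on $s=0$ or $s=1$, and it stays below $\Phi=1$ only because $r$ is free too.

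So the missing ingredient is a proof that, in each $D_k$, points with $0<r<1$, $0\le s<1$ never beat the best value on $\{r=0\}\cup\{r=1\}\cup\{s=1\}$. Combine this with the explicit computation on $s=1$ above and the comparison of the resulting candidates (e.g.\ $\nu=\frac{4}{27}(|\mu|+1)^3-(|\mu|+1)$ is where the $t=\pi$ value equals $1$), and you have the whole lemma.
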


\section{Main results}

\medskip

In this section we will use the two lemmas given above to prove the same results as in \cite{DS26}. We will use the same notations.

\begin{theorem}\label{th1}Let $f\in \mathcal{S}_{ex}^{\star}$ and let  $\Gamma_{1}$, $\Gamma_{2}$, $\Gamma_{3}$ be the logarithmic coefficients of $f^{-1}$. Then
\begin{itemize}
  \item[($i$)]  $ |\Gamma_{1}|\leq \frac{1}{2}\alpha $,\\
  \item[($ii$)]  $|\Gamma_{2}|\leq \left\{
  \begin{array}{ll}
  \frac{\alpha}{4}, & 0<\alpha \leq \frac{2}{3}, \\[2mm]
  \frac{3}{8}\alpha^{2},& \frac{2}{3}\leq \alpha \leq 1,
   \end{array}
   \right.
$\\[2mm]
  \item[($iii$)]   $|\Gamma_{3}|\leq \left\{
  \begin{array}{ll}
  \frac{\alpha}{6},& 0<\alpha\leq \alpha_{\star}, \\[2mm]
  \frac{\alpha(7\alpha+2)}{18}\sqrt{\frac{2(7\alpha+2)}{29\alpha^{2}+42\alpha+12}},& \alpha_{\star}\leq \alpha\leq \alpha_{b},\\[2mm]
  \frac{29\alpha (49\alpha^{2}-16)^{3/2}}{2232\sqrt{29\alpha^{2}-12}},&\alpha_{b}\leq \alpha \leq\alpha_{c},\\[2mm]
  \frac{29}{72}\alpha^{3},& \alpha_{c}\leq \alpha \leq 1,
   \end{array}
   \right.$
\end{itemize}
where $\alpha_{\star}=0.5427\ldots $ is the unique positive root of the equation $686 \alpha^{3}+327\alpha^{2}
-210\alpha -92=0$, $\alpha_{b}=0.6791\ldots $ is the unique positive root of the equation $1421\alpha^{3}+812\alpha^{2}
-712\alpha -336=0$, and $\alpha_{c}=\sqrt{\frac{32}{67}}=0.69109\ldots $ is the unique positive root of the equation $67\alpha^{2}-32=0$.
All bounds are sharp for every $\alpha\in(0,1].$
\end{theorem}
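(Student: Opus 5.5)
The plan is to work directly from the expressions \eqref{eq10} for $\Gamma_1,\Gamma_2,\Gamma_3$ in terms of the Schwarz coefficients $c_1,c_2,c_3$, and to bound each one with Lemma~\ref{lem-1} or Lemma~\ref{lem-2}. Part (i) is immediate: $|\Gamma_1|=\frac12\alpha|c_1|\le\frac12\alpha$ by Lemma~\ref{lem-1}. Equality holds for $\omega(z)=z$, i.e.\ for $zf'(z)/f(z)=e^{\alpha z}$.

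For (ii), write $|\Gamma_2|\le \frac{\alpha}{4}|c_2|+\frac38\alpha^2|c_1|^2$. Lemma~\ref{lem-1} gives $|c_2|\le 1-|c_1|^2$, so with $t=|c_1|^2\in[0,1]$,
\[
|\Gamma_2|\le \frac{\alpha}{4}+\Bigl(\frac38\alpha^2-\frac{\alpha}{4}\Bigr)t .
\]
This bound is linear in $t$. Its maximum is at $t=0$ when $\alpha\le 2/3$, giving $\alpha/4$, and at $t=1$ when $\alpha\ge 2/3$, giving $\frac38\alpha^2$. For sharpness, $\omega(z)=z^2$ attains the first bound and $\omega(z)=z$ the second.

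For (iii), factor $|\Gamma_3|=\frac{\alpha}{6}\,|c_3+\mu c_1c_2+\nu c_1^3|$ with
\[
\mu=-\frac{7}{2}\alpha,\qquad \nu=\frac{29}{12}\alpha^2 ,
\]
and apply Lemma~\ref{lem-2}. Since the lemma depends only on $|\mu|$, the sign of $\mu$ does not matter. The work is then to track which region $D_k$ the curve $\alpha\mapsto(\tfrac72\alpha,\tfrac{29}{12}\alpha^2)$, $0<\alpha\le1$, passes through. Note that $|\mu|\le\frac12$ exactly when $\alpha\le 1/7$, and $|\mu|=2$ at $\alpha=4/7$; the range $|\mu|\ge4$ is never reached.

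I expect the following sequence:
\begin{itemize}
\item[(a)] For small $\alpha$ the curve lies in $D_1\cup D_2$, so $\Phi=1$ and $|\Gamma_3|\le\alpha/6$. It leaves $D_2$ where $\nu=\frac4{27}(|\mu|+1)^3-(|\mu|+1)$. Substituting and clearing denominators should give $686\alpha^3+327\alpha^2-210\alpha-92=0$, i.e.\ $\alpha=\alpha_\star$.
\item[(b)] Next the curve enters $D_8$, and then $D_9$ once $|\mu|\ge2$. There $\Phi=\frac23(|\mu|+1)\sqrt{(|\mu|+1)/(3(|\mu|+1+\nu))}$. Multiplying by $\alpha/6$ should simplify to $\frac{\alpha(7\alpha+2)}{18}\sqrt{\frac{2(7\alpha+2)}{29\alpha^2+42\alpha+12}}$. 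This regime persists until $\nu=\frac{2|\mu|(|\mu|+1)}{\mu^2+2|\mu|+4}$, which after clearing denominators is the cubic $1421\alpha^3+812\alpha^2-712\alpha-336=0$, giving $\alpha_b$.
\item[(c)] Then the curve is in $D_{10}$, where $\Phi=\frac13\nu\frac{\mu^2-4}{\mu^2-4\nu}\sqrt{\frac{\mu^2-4}{3(\nu-1)}}$. With $\mu^2-4=\frac{49\alpha^2-16}{4}$, $\mu^2-4\nu=\frac{(147-116)\alpha^2}{12}=\frac{31\alpha^2}{12}$ and $\nu-1=\frac{29\alpha^2-12}{12}$, this should yield the third expression, with the constant $2232=72\cdot31$. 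The upper boundary of $D_{10}$ is $\nu=\frac1{12}(\mu^2+8)$, which reads $29\alpha^2=\frac{49}{4}\alpha^2+8$, i.e.\ $67\alpha^2=32$, giving $\alpha_c$.
\item[(d)] Beyond $\alpha_c$ the curve is in $D_6$, where $\Phi=|\nu|$ and $|\Gamma_3|\le\frac{29}{72}\alpha^3$.
\end{itemize}

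Sharpness is inherited from the sharpness in Lemma~\ref{lem-2}. For each $\alpha$, the extremal $\omega\in\Omega$ for the corresponding $(\mu,\nu)$ substituted into \eqref{eq4} produces an extremal $f\in\mathcal{S}^\star_{ex}$.

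The main obstacle is the bookkeeping in (iii). It requires checking that the parametrized curve crosses the boundary curves of the $D_k$ exactly once and in the claimed order, that each crossing equation has a unique root in $(0,1]$ (for instance via sign changes plus monotonicity of the cubics there), and that the boundary between $D_8$ and $D_9$ at $\alpha=4/7$ causes no change of formula. I would verify these crossings numerically first and then confirm them by elementary sign arguments.
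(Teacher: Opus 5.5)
Your proposal is correct and follows essentially the same route as the paper: Lemma~\ref{lem-1} with $|c_2|\le 1-|c_1|^2$ handles (i)--(ii), and for (iii) you apply Lemma~\ref{lem-2} with $|\mu|=\frac72\alpha$, $\nu=\frac{29}{12}\alpha^2$, tracking the curve through $D_1$, $D_2$, $D_8$, $D_9$, $D_{10}$, $D_6$, with sharpness inherited from Lemma~\ref{lem-2}. Your boundary equations and simplifications all check out, including $\mu^2-4\nu=\frac{31}{12}\alpha^2$, $2232=72\cdot 31$, $67\alpha^2=32$, and the two cubics.
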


\medskip

\begin{proof}
Using the relations \eqref{eq10} for $f\in \mathcal{S}_{ex}^{\star}$, $0<\alpha\leq1$, as well as Lemma \ref{lem-1}, we obtain the following.
\begin{itemize}
  \item[($i$)] First, $|\Gamma_{1}|=\left|-\frac{1}{2}\alpha c_{1}\right|\leq\frac{1}{2}\alpha $ with equality for $|c_{1}|=1$, i.e. for the function $f$ defined by
$\frac{z f'(z)}{f(z)}=e^{\alpha z}$.
  \item[($ii$)] Next, \[
\begin{split}
| \Gamma_{2}|&= \left|-\frac{1}{4}\alpha c_{2}+\frac{3}{8}\alpha^{2}c_{1}^{2}\right|\\
&\leq\frac{\alpha}{8}\left(2|c_{2}|+3\alpha |c_{1}|^{2}\right) \\
&\leq \frac{\alpha}{8}\left(2(1-|c_{1}|^{2})+3\alpha |c_{1}|^{2}\right)\\
& =\frac{\alpha}{8}\left(2+(3\alpha -2)|c_{1}|^{2}\right),
\end{split}
\]
and the result easily follows from the sign of $3\alpha -2$ and the fact that  $0\leq |c_{1}|\leq1$. Equality in the case $0<\alpha \leq \frac{2}{3}$ holds for $c_{1}=0$ and $|c_{2}|=1$, and in the second case $\frac{2}{3}\le\alpha \leq 1$ for $|c_{1}|=1$ and $c_{2}=0$, i.e, for
$\omega (z)=z^{2}$ and $\omega (z)=z$ in \eqref{eq4}, respectively.
  \item[($iii$)] From \eqref{eq10} we have
\begin{equation}\label{eq11}
|\Gamma_{3}|=\frac{\alpha}{6}\left|c_{3}-\frac{7}{2}\alpha c_{1}c_{2}+\frac{29}{12}\alpha^{2}c_{1}^{3}\right|,
\ee
so, we can apply Lemma \ref{lem-2} with $|\mu|=\frac{7}{2}\alpha$ and $\nu=\frac{29}{12}\alpha^{2}$. We note that $\nu>0$ for all $0<\alpha\leq1$ . In that sense we consider some particular cases of $|\mu|$ and $\nu$.
\begin{itemize}
  \item[($a$)] If $|\mu|\leq \frac{1}{2}$, or equivalently $0<\alpha\leq \frac{1}{7}$, then $\nu\leq\frac{29}{12}\cdot \frac{1}{49}<1$,
which by Lemma \ref{lem-2} implies $\Phi(\mu,\nu)=1,$ i.e., $|\Gamma_{3}|\leq\frac{\alpha}{6}$ (case $D_{1}$ in \eqref{eq11}).
  \item[($b$)] Now let $\frac{1}{2}\leq |\mu|\leq 2$  ($\Leftrightarrow \frac{1}{7}\leq\alpha\leq \frac{4}{7}$) and
$\frac{4}{27}(|\mu|+1)^3 - (|\mu|+1) \leq \nu $. These conditions imply $\nu\leq1$ and they are valid if
$686 \alpha^{3}+327\alpha^{2}-210\alpha -92\leq 0$, i.e., for $\frac{1}{7}\le\alpha\leq \alpha_{\star} $,
where $\alpha_{\star}$ is given in the statements of this theorem. So, $(\mu,\nu)\in D_{2}$ and Lemma \ref{lem-2} again brings
$\Phi(\mu,\nu)=1$ and $|\Gamma_{3}|\leq\frac{\alpha}{6}$.
\end{itemize}

From (a) and (b) we conclude that $|\Gamma_{3}|\leq\frac{\alpha}{6}$ for $0<\alpha\leq \alpha_{\star}$. We proceed with the remaining cases.
\begin{itemize}
  \item[($c$)] Further, $(\mu,\nu)\in D_{8}$, if $\alpha_{\star}\leq \alpha \leq \frac{4}{7} $ and then we get
\[
\begin{split}
\Phi(\mu,\nu) &=\frac{2}{3}(|\mu|+1)\left(\frac{|\mu|+1}{3(|\mu|+1+\nu)}\right)^{1/2}\\
&=\frac{\alpha(7\alpha+2)}{18}\sqrt{\frac{2(7\alpha+2)}{29\alpha^{2}+42\alpha+12}}.
\end{split}
\]
Also, $(\mu,\nu)\in D_{9}$  for $|\mu|\geq2$ ($\Leftrightarrow \frac{4}{7}\leq\alpha\leq 1) $ and
$\nu \leq \frac{2|\mu|(|\mu|+1)}{\mu^{2}+2|\mu|+4}$, which is equivalent to
$1421\alpha^{3}+812\alpha^{2}-712\alpha -336\leq 0$ ($\Leftrightarrow 0\leq \alpha\leq \alpha_{b} $),
where $\alpha_{b}$ is the positive root of appropriate equation. In summary for
 $\frac{4}{7}\leq \alpha\leq \alpha_{b}$ we have the same result as case $D_{8}$. It means that for
$\alpha_{\star}\leq \alpha\leq \alpha_{b}$ we get the statement of the theorem.
  \item[($d$)] The case $(\mu,\nu)\in D_{10}$ follows from  $2\leq|\mu|\leq4$ ($\Leftrightarrow \frac{4}{7}\leq\alpha\leq 1) $ and
$\frac{2|\mu|(|\mu|+1)}{\mu^{2}+2|\mu|+4}\leq \nu \leq\frac{1}{12}(\mu^{2}+8)$ ($\Leftrightarrow \alpha_{b}\leq \alpha \leq\alpha_{c})$, meaning that for $\alpha_{b}\leq \alpha \leq\alpha_{c}$ we have
\[
\begin{split}
\Phi(\mu,\nu) &=\frac{1}{3}\nu \left(\frac{\mu^{2}-4}{\mu^{2}-4\nu}\right)\left(\frac{\mu^{2}-4}{3(\nu-1)}\right)^{1/2}\\
&=\frac{29\alpha (49\alpha^{2}-16)^{3/2}}{2232\sqrt{29\alpha^{2}-12}}.
\end{split}
\]
\item[($e$)] The last remaining  case is $(\mu,\nu)\in D_{6}$ yields from  $2\leq|\mu|\leq4$ and $|\nu| \geq \frac{1}{12}(\mu^{2}+8)$, which is equivalent to
$\alpha_{c}\leq \alpha \leq 1$ and implies $\Phi(\mu,\nu)=\frac{29\alpha^{3}}{72}$.
\end{itemize}

We note that all the previous estimates are sharp and the extremal functions for all cases are given in \cite[p.135 and 136]{126}, i.e., are again defined by \eqref{eq4}. For the cases $(a)$ and $(b)$, $\omega(z)=z^3$; for the case $(c)$, $\omega(z)=z\frac{c_{1}-z}{1-c_{1}z}$ with $c_{1}=\sqrt{\frac{|\mu|+1}{3(|\mu|+1+\nu)}}$; for $(d)$, $\omega(z)=c_1z+c_2z^2+c_3z^3+\cdots$ with
\[ c_1= \sqrt{\frac{3\mu^2-2(\mu^2+2)\nu}{3(\nu-1)(4\nu-\mu^2)}}, \quad c_2 = (1-c_1^2)e^{i\phi}, \quad c_3 = -c_1c_2e^{i\phi},\]
where $\phi = \pm \arccos\frac{\mu[2(\mu^2+2)-(\mu^2+8)\nu]}{2[3\mu^2-2(\mu^2+2)\nu]}$; and finally, for the case $(e)$, $\omega(z)=z$.
\end{itemize}
\end{proof}

\begin{theorem}\label{th2}Let $0< \alpha \leq 1$ and $f\in \mathcal{S}_{ex}^{\star}$. Then for its inverse logarithmic coefficients we have
$$-\frac{\alpha}{2}\sqrt{\frac{2}{3\alpha+2}}\leq |\Gamma_{2}|-|\Gamma_{1}|\leq \frac{\alpha}{4}.$$
These inequalities are sharp for all $\alpha\in(0,1]$.
\end{theorem}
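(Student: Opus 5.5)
The idea is to express both quantities through the Schwarz coefficients using \eqref{eq10}, namely $|\Gamma_1|=\frac{\alpha}{2}|c_1|$ and $|\Gamma_2|=\frac{\alpha}{8}\left|3\alpha c_1^2-2c_2\right|$. Then I would use only the first two inequalities of Lemma \ref{lem-1}, reducing everything to a one-variable problem in $x=|c_1|\in[0,1]$.

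\emph{Upper bound.} By the triangle inequality and $|c_2|\le 1-x^2$,
\[
|\Gamma_2|-|\Gamma_1|\le \frac{\alpha}{8}\left(2(1-x^2)+3\alpha x^2\right)-\frac{\alpha}{2}x=\frac{\alpha}{8}\left(2+(3\alpha-2)x^2-4x\right)=:g(x).
\]
If $3\alpha-2\le 0$, then $g$ is decreasing on $[0,1]$. If $3\alpha-2>0$, then $g$ is convex, so its maximum is attained at an endpoint; since $g(1)=\frac{\alpha}{8}(3\alpha-4)<\frac{\alpha}{4}=g(0)$, that endpoint is $x=0$. In both cases $g(x)\le g(0)=\alpha/4$. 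Equality holds for $c_1=0$, $|c_2|=1$, i.e.\ for $\omega(z)=z^2$ in \eqref{eq4}.

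\emph{Lower bound.} I would use $|\Gamma_2|\ge\max\{0,\frac{\alpha}{8}(3\alpha x^2-2|c_2|)\}$ together with $|c_2|\le 1-x^2$. This gives
\[
|\Gamma_2|-|\Gamma_1|\ge \max\Big\{0,\tfrac{\alpha}{8}\big((3\alpha+2)x^2-2\big)\Big\}-\frac{\alpha}{2}x=:h(x).
\]
Put $x_0=\sqrt{2/(3\alpha+2)}\le 1$. On $[0,x_0]$ we have $h(x)=-\alpha x/2$, which is minimized at $x_0$. On $[x_0,1]$ we have $h(x)=\frac{\alpha}{8}\left((3\alpha+2)x^2-4x-2\right)$, whose derivative is $\frac{\alpha}{8}\left(2(3\alpha+2)x-4\right)$. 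At $x_0$ this derivative equals $\frac{\alpha}{8}\left(2\sqrt{2(3\alpha+2)}-4\right)\ge 0$, and since $h$ is convex it stays nonnegative, so $h$ is increasing there. Hence $h\ge h(x_0)=-\frac{\alpha}{2}\sqrt{\frac{2}{3\alpha+2}}$.

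\emph{Sharpness of the lower bound.} This is the step needing care, since we must make $\Gamma_2=0$ exactly while $|c_1|=x_0$. That requires $c_2=\frac{3\alpha}{2}c_1^2$, so $|c_2|=\frac{3\alpha}{3\alpha+2}=1-x_0^2$, which is exactly the extremal case of Lemma \ref{lem-1}. I would take $\omega(z)=z\frac{x_0+z}{1+x_0z}$. It has $c_1=x_0$ and $c_2=1-x_0^2=\frac{3\alpha}{2}x_0^2$, so $\Gamma_2=0$ and $|\Gamma_1|=\frac{\alpha}{2}x_0$. This attains the lower bound for every $\alpha\in(0,1]$.
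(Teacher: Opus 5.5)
Your proof is correct and follows essentially the same route as the paper. Both use the triangle and reverse triangle inequalities on $|\Gamma_2|=\frac{\alpha}{8}|3\alpha c_1^2-2c_2|$, the bound $|c_2|\le 1-|c_1|^2$, a split at $x_0=\sqrt{2/(3\alpha+2)}$, and the same extremal functions $\omega(z)=z^2$ and $\omega(z)=z\frac{x_0+z}{1+x_0z}$. Your $\max\{0,\cdot\}$ formulation, splitting on $|c_1|$ rather than on the sign of $2|c_2|-3\alpha|c_1|^2$ as the paper does, is slightly tidier: it automatically covers the configurations with $3\alpha|c_1|^2>2|c_2|$ but $|c_1|<x_0$, which the paper's second case passes over tacitly.
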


\begin{proof}
Using \eqref{eq10} we obtain
\begin{equation}\label{eq12}
\begin{split}
|\Gamma_{2}|-|\Gamma_{1}|&=\left|-\frac{1}{4}\alpha c_{2}+\frac{3}{8}\alpha^{2}c_{1}^{2}\right|-\frac{1}{2}\alpha |c_{1}|\\
&=\frac{\alpha}{8}\left(|2c_{2}-3\alpha c_{1}^{2}|-4|c_{1}|\right).
\end{split}
\ee
From \eqref{eq12} and Lemma \ref{lem-1} we have
\[
\begin{split}
|\Gamma_{2}|-|\Gamma_{1}|&\leq \frac{\alpha}{8}\left(2|c_{2}|+3\alpha |c_{1}|^{2}-4|c_{1}|\right)\\
&\leq\frac{\alpha}{8}\left(2(1-|c_{1}|^{2})+3\alpha |c_{1}|^{2}-4|c_{1}|\right)\\
&=\frac{\alpha}{8}\left[(3\alpha-2)|c_{1}|^{2}-4|c_{1}|+2\right]\\
&\leq\frac{\alpha}{4},
\end{split}
\]
since $ 0\leq |c_{1}|\leq1$ and $0< \alpha \leq 1$. Equality is valid for $c_{1}=0$ and $|c_{2}|=1$, i.e., for the function $f$ defined
for $\omega(z)=z^{2}$ in \eqref{eq4}.

\medskip

For the lower bound, we will use that for complex numbers $a$ and $b$ we have $|a\pm b|\geq ||a|-|b||$. So, from \eqref{eq12} we get
\begin{equation}\label{eq13}
|\Gamma_{2}|-|\Gamma_{1}|\geq\frac{\alpha}{8}\left(\left|2|c_{2}|-3\alpha |c_{1}|^{2}\right|-4|c_{1}|\right).
\ee

In the case $2|c_{2}|-3\alpha |c_{1}|^{2}\geq0$ we have $2(1-|c_{1}|^{2})\geq2|c_{2}|\geq3\alpha |c_{1}|^{2}$, which implies $0\leq |c_{1}|\leq\sqrt{\frac{2}{3\alpha+2}}$. For such $|c_{1}|$, from \eqref{eq13} we obtain
$$|\Gamma_{2}|-|\Gamma_{1}|\geq\frac{\alpha}{8}\left(2|c_{2}|-3\alpha |c_{1}|^{2}-4|c_{1}|\right)
\geq-\frac{\alpha}{2}|c_{1}|\geq-\frac{\alpha}{2}\sqrt{\frac{2}{3\alpha+2}}.$$

\medskip

In the case when $\left|2|c_{2}|-3\alpha |c_{1}|^{2}\right|=3\alpha|c_{1}|^{2}-2|c_{2}|>0$ we want to prove that
\[|\Gamma_{2}|-|\Gamma_{1}|\geq\frac{\alpha}{8}\left(3\alpha|c_{1}|^{2}-2|c_{2}|-4|c_{1}|\right)\geq -\frac{\alpha}{2}\sqrt{\frac{2}{3\alpha+2}},\]
when $\sqrt{\frac{2}{3\alpha+2}}\leq|c_{1}|\leq1$.
Since $|c_{2}|\leq 1-|c_{1}|^{2}$, the last will follow from
\[(3\alpha+2)|c_{1}|^{2}-2\geq 4\left(|c_{1}|-\sqrt{\frac{2}{3\alpha+2}}\right),\]
which is equivalent to
$$ \left(|c_{1}|-\sqrt{\frac{2}{3\alpha+2}}\right)\left((3\alpha+2)\left( |c_{1}|+\sqrt{\frac{2}{3\alpha+2}}\right)-4  \right)\geq0.$$
Finally, the last is true since $|c_{1}|\geq\sqrt{\frac{2}{3\alpha+2}}$ (by assumption) and
\[
\begin{split}
(3\alpha+2)\left( |c_{1}|+\sqrt{\frac{2}{3\alpha+2}}\right)-4 &\geq 2(3\alpha+2)\sqrt{\frac{2}{3\alpha+2}}-4 \\
&=2\sqrt{2(3\alpha+2)}-4>0.
\end{split}
\]

\medskip

The equality is valid for $c_{1}=\sqrt{\frac{2}{3\alpha+2}}$ and $|c_{2}|= 1-|c_{1}|^{2}=\frac{3\alpha}{3\alpha+2}$,
i.e., for $\omega(z)=z\frac{c_{1}+z}{1+c_{1}z}=c_{1}z+(1-c_{1}^{2})z^{2}+\cdots $ in \eqref{eq4}.
\end{proof}

\medskip

In the next theorem we consider the estimate of the second Hankel determinant of the type
\begin{equation}\label{eq16}
H_{2,1}(F_{f^{-1}}/2)=\Gamma_{1}\Gamma_{3}-\Gamma_{2}^{2}.
\ee

We note that there exist more general definition of the Hankel determinant, but for our needs it is enough the type given above.

\begin{theorem}\label{th3} Let $0< \alpha \leq 1$ and let $f\in \mathcal{S}_{ex}^{\star}$. Then we have the following inequalities for
the Hankel determinant given in \eqref{eq16}:
$$|H_{2,1}(F_{f^{-1}}/2)|\leq \left\{
  \begin{array}{ll}
  \frac{\alpha^{2}}{16}, & 0<\alpha \leq \frac{2}{5}, \\
  \frac{\alpha^{2}}{16}\left[1+\frac{(5\alpha-2)^{2}}{12+60\alpha-35\alpha^{2}}\right],& \frac{2}{5}\leq \alpha \leq 1.
   \end{array}
   \right.
$$
The first inequality is sharp.
\end{theorem}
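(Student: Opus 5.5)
The plan is to write the Hankel determinant as a polynomial in $c_1,c_2,c_3$ using \eqref{eq10}, bound it by the triangle inequality and Lemma \ref{lem-1}, and maximize the resulting real function of $x=|c_1|$, $y=|c_2|$.

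\textbf{Step 1: expand the determinant.} Multiplying out $\Gamma_1\Gamma_3$ and $\Gamma_2^2$ from \eqref{eq10}, the $c_1^2c_2$ coefficients combine to $-\frac{7}{24}+\frac{3}{16}=-\frac{5}{48}$. The $c_1^4$ coefficients combine to $\frac{29}{144}-\frac{9}{64}=\frac{35}{576}$. Hence
\[
H_{2,1}(F_{f^{-1}}/2)=\frac{\alpha^{2}}{16}\left[\frac{4}{3}c_1c_3-c_2^{2}-\frac{5}{3}\alpha c_1^{2}c_2+\frac{35}{36}\alpha^{2}c_1^{4}\right].
\]

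\textbf{Step 2: reduce to two real variables.} Apply the triangle inequality and the third bound of Lemma \ref{lem-1}, $|c_3|\le 1-x^2-\frac{y^2}{1+x}$. This gives
\[
|H_{2,1}|\le\frac{\alpha^2}{16}\,G(x,y),\qquad
G(x,y)=\frac43x(1-x^2)+\frac{3-x}{3(1+x)}\,y^2+\frac53\alpha x^2y+\frac{35}{36}\alpha^2x^4,
\]
on the region $0\le x\le1$, $0\le y\le 1-x^2$. The $y^2$ coefficient $1-\frac{4x}{3(1+x)}=\frac{3-x}{3(1+x)}$ is positive. So $G$ is convex in $y$ and its maximum is attained at $y=0$ or $y=1-x^2$.

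\textbf{Step 3: the edge $y=1-x^2$.} Here $\frac{3-x}{3(1+x)}(1-x^2)^2=(1-x^2)^2-\frac43x(1-x)(1-x^2)$. The cubic terms in $x$ then cancel, and with $t=x^2\in[0,1]$ we get
\[
G=1+\frac{5\alpha-2}{3}\,t-\frac{12+60\alpha-35\alpha^2}{36}\,t^2 .
\]
The leading coefficient is negative for $0<\alpha\le1$.
\begin{itemize}
\item For $\alpha\le\frac25$ the linear coefficient is $\le0$, so the maximum is at $t=0$ and equals $1$.
\item For $\alpha\ge\frac25$ the vertex is at $t_0=\frac{6(5\alpha-2)}{12+60\alpha-35\alpha^2}$. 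One checks $t_0\le1$, since this is equivalent to $35\alpha^2-30\alpha-24\le0$, true on $(0,1]$. The maximum value is $1+\frac{(5\alpha-2)^2}{12+60\alpha-35\alpha^2}$, which is exactly the stated bound.
\end{itemize}

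\textbf{Step 4: the edge $y=0$ (main obstacle).} Here $G=\frac43x-\frac43x^3+\frac{35}{36}\alpha^2x^4\le \frac43x-\frac43x^3+\frac{35}{36}x^4$. I expect this to be the main obstacle: one must verify this quartic is at most $1$ on $[0,1]$, hence at most the edge value from Step 3. I would handle it by elementary calculus. Its value at $x=1$ is $\frac{35}{36}$, and its interior maximum is near $x\approx0.6$ with value about $0.6$. If this crude bound turned out insufficient somewhere, I would compare the two edges directly as functions of $x$ for each range of $\alpha$.

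\textbf{Sharpness.} For $0<\alpha\le\frac25$, take $\omega(z)=z^2$ in \eqref{eq4}, i.e. $c_1=c_3=0$, $c_2=1$. Then $H_{2,1}=-\frac{\alpha^2}{16}$, so the first inequality is attained. The second bound is not claimed sharp. This is consistent with the argument, since the triangle inequality step need not be attainable at the interior point $t_0$.
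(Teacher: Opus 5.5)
Your proof is correct and follows essentially the same route as the paper: the same expansion of $\Gamma_1\Gamma_3-\Gamma_2^2$, the triangle inequality together with Lemma~\ref{lem-1}, reduction to the edge $|c_2|=1-|c_1|^2$, and the same quadratic in $t=|c_1|^2$; you also carry out the vertex computation for $\alpha\ge\frac25$, which the paper leaves implicit. Your Step 4 is unnecessary, because the $y$ and $y^2$ coefficients of $G$ are both nonnegative, so $G$ is increasing in $y$ (this is what the paper uses); also, the quartic $\frac43x-\frac43x^3+\frac{35}{36}x^4$ has no interior maximum but is increasing on $[0,1]$ with maximum $\frac{35}{36}$ at $x=1$, so your conclusion that it is at most $1$ still holds.
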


\begin{proof}
Using \eqref{eq16} and \eqref{eq10}, after some calculations, we have
\[
\begin{split}
|H_{2,1}(F_{f^{-1}}/2)|&=\frac{\alpha^{2}}{12}\left|c_{1}c_{3}-\frac{5}{4}\alpha c_{1}^{2}c_{2}
+\frac{35}{48}\alpha^{2} c_{1}^{4}-\frac{3}{4}c_{2}^{2}\right| \\
&\leq \frac{\alpha^{2}}{12}\left(|c_{1}||c_{3}|+\frac{5}{4}\alpha |c_{1}|^{2}|c_{2}|
+\frac{35}{48}\alpha^{2} |c_{1}|^{4}+\frac{3}{4}|c_{2}|^{2}\right).
\end{split}
\]
If we apply the estimations of $|c_{3}|$ and $|c_{2}|$ from Lemma \ref{lem-1} we get
\[
\begin{split}
&\quad  |H_{2,1}(F_{f^{-1}}/2)|\\
&\leq \frac{\alpha^{2}}{12}\left[|c_{1}|\left(1-|c_{1}|^{2}-\frac{|c_{2}|^{2}}{1+|c_{1}|}\right)+\frac{5}{4}\alpha |c_{1}|^{2}|c_{2}|+\frac{35}{48}\alpha^{2} |c_{1}|^{4}+\frac{3}{4}|c_{2}|^{2}\right]\\
&=\frac{\alpha^{2}}{12}\left[|c_{1}|-|c_{1}|^{3}+\frac{1}{4}\cdot \frac{3-|c_{1}|}{1+|c_{1}|}|c_{2}|^{2}+
\frac{5}{4}\alpha |c_{1}|^{2}|c_{2}|+\frac{35}{48}\alpha^{2} |c_{1}|^{4}\right]\\
&\leq \frac{\alpha^{2}}{12}\left[|c_{1}|-|c_{1}|^{3}+\frac{1}{4}\cdot \frac{3-|c_{1}|}{1+|c_{1}|}(1-|c_{1}|^{2})^{2}+
\frac{5}{4}\alpha |c_{1}|^{2}(1-|c_{1}|^{2})+\frac{35}{48}\alpha^{2} |c_{1}|^{4}\right]\\
&=\frac{\alpha^{2}}{48}\left[ 3+(5\alpha-2)|c_{1}|^{2}-\frac{1}{12}(12+60\alpha-35\alpha^{2})|c_{1}|^{4}\right] \equiv I.
\end{split}
\]

The second inequality above holds since $\frac{1}{4}\cdot \frac{3-|c_{1}|}{1+|c_{1}|}>0$, i.e., $\frac{1}{4}\cdot \frac{3-|c_{1}|}{1+|c_{1}|}|c_2|^2$ is an increasing function of $|c_2|^2$.

\medskip

If $5\alpha-2\leq0$, i.e., if $0<\alpha \leq \frac{2}{5}$ and since  $12+60\alpha-35\alpha^{2}>0 $ for all
$\alpha\in (0,1]$, then $|H_{2,1}(F_{f^{-1}}/2)|\leq\frac{\alpha^{2}}{16}$. Equality is valid for $c_{1}=0$ and
$c_{2}=1$, i.e., for $\omega(z)=z^{2}$ in \eqref{eq4}.
\end{proof}

In the next theorem we consider estimates the Hermitian-Toeplitz determinant of third order:
\begin{equation}\label{eq17}
T_{3,1}(f)=2\real (a_{2}^{2}\overline{a_{3}})-2|a_{2}|^{2}-|a_{3}|^{2}+1.
\ee

\medskip

\begin{theorem}\label{th4}Let $0< \alpha \leq 1$ and $f\in \mathcal{S}_{ex}^{\star}$ is given by \eqref{eq1}. Then for
the Hermitian-Toeplitz determinant defined by \eqref{eq17} we have the next sharp estimates:
$$B_{L}(\alpha)\leq T_{3,1}(f)\leq 1,$$
where
$$ B_{L}(\alpha)=
\left\{
  \begin{array}{ll}
  1-2\alpha^{2}+\frac{15}{16}\alpha^{4}, & 0<\alpha \leq \alpha_{H}, \\
  1-\frac{1}{4}\alpha^{2}-\frac{\alpha^{2}(\alpha+6)^{2}}{4(15\alpha^{2}+4\alpha -4)},& \alpha_{H}\leq\alpha\leq1,
   \end{array}
   \right. $$
 and $\alpha_{H}=\frac{-1+\sqrt{241}}{15}=0.9683\ldots$.
\end{theorem}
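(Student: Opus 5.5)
The plan is to rewrite $T_{3,1}(f)$ as a difference of two squares, bound it using $|a_2|$ and $|a_3-a_2^2|$, and then use Lemma \ref{lem-1} to reduce the lower bound to a one-variable quadratic in $t=|c_1|^2$.

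\emph{The algebraic identity.} Expanding $|a_3-a_2^2|^2=|a_3|^2-2\real(a_2^2\overline{a_3})+|a_2|^4$ gives
\[
T_{3,1}(f)=(1-|a_2|^2)^2-|a_3-a_2^2|^2 .
\]
For $f\in\mathcal{S}_{ex}^{\star}$, relations \eqref{eq5} give $a_2=\alpha c_1$ and
\[
a_3-a_2^2=\tfrac12\alpha c_2-\tfrac14\alpha^2c_1^2 .
\]
Since $|a_2|\le\alpha\le1$, we get $T_{3,1}(f)\le(1-|a_2|^2)^2\le1$. Equality holds for $\omega\equiv0$ in \eqref{eq4}, i.e.\ $f(z)=z$, which settles the upper bound.

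\emph{Reduction of the lower bound.} Put $t=|c_1|^2\in[0,1]$. By Lemma \ref{lem-1}, $|c_2|\le1-t$, so
\[
|a_3-a_2^2|\le\tfrac{\alpha}{2}(1-t)+\tfrac{\alpha^2}{4}t .
\]
Hence $T_{3,1}(f)\ge g(t)$, where
\[
g(t)=(1-\alpha^2t)^2-\Bigl(\tfrac{\alpha}{2}+\beta t\Bigr)^2,\qquad \beta=\tfrac{\alpha(\alpha-2)}{4}.
\]
This $g$ is a quadratic in $t$ with leading coefficient $\alpha^4-\beta^2=\frac{\alpha^2}{16}(15\alpha^2+4\alpha-4)$. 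That coefficient vanishes at $\alpha=2/5$, is negative for $\alpha<2/5$ and positive for $\alpha>2/5$.

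\emph{Minimizing $g$ on $[0,1]$.} For $\alpha\le 2/5$, $g$ is concave, so its minimum is at an endpoint. We have $g(0)=1-\alpha^2/4$ and $g(1)=(1-\alpha^2)^2-\alpha^4/16=1-2\alpha^2+\frac{15}{16}\alpha^4$. The inequality $g(1)\le g(0)$ reduces to $\frac{15}{16}\alpha^2\le\frac74$, which always holds, so the minimum is $g(1)$.

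For $\alpha>2/5$, $g$ is convex with vertex
\[
t^*=\frac{\alpha^2+\alpha\beta/2}{\alpha^4-\beta^2}.
\]
I expect $t^*\ge1$ to be equivalent, after clearing denominators, to $15\alpha^2+2\alpha-16\le0$, i.e.\ $\alpha\le\alpha_H=\frac{-1+\sqrt{241}}{15}$. In that range $g$ is decreasing on $[0,1]$ and the minimum is again $g(1)$. For $\alpha_H\le\alpha\le1$ the minimum is the vertex value $g(t^*)=g(0)-\frac{(g'(0))^2}{4(\alpha^4-\beta^2)}$. Simplifying this should give $1-\frac14\alpha^2-\frac{\alpha^2(\alpha+6)^2}{4(15\alpha^2+4\alpha-4)}$.

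The main obstacle is this bookkeeping: confirming the threshold equation for $\alpha_H$ and the exact closed form at the vertex. It is routine but error-prone, and I would check it by verifying continuity of the two formulas at $\alpha=\alpha_H$.

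\emph{Sharpness.} Equality in the lower bound requires three things: $|c_1|^2=t_0$, where $t_0$ is the minimizing point ($t_0=1$ or $t_0=t^*$); $|c_2|=1-t_0$; and $\frac12\alpha c_2$ and $-\frac14\alpha^2c_1^2$ having the same argument. Take $c=\sqrt{t_0}$ and
\[
\omega(z)=z\,\frac{c-z}{1-cz}=cz+(c^2-1)z^2+\cdots .
\]
Then $c_1=c>0$ and $c_2=-(1-c^2)\le0$, so both terms of $a_3-a_2^2$ are nonpositive reals, $|a_3-a_2^2|=\frac\alpha2(1-t_0)+\frac{\alpha^2}4t_0$, and $|a_2|^2=\alpha^2t_0$. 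Inserting this $\omega$ into \eqref{eq4} gives the extremal function. For $t_0=1$ this is just $\omega(z)=z$.
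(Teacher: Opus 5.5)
Your proposal is correct and follows essentially the same route as the paper: the identity $T_{3,1}(f)=(1-|a_2|^2)^2-|a_3-a_2^2|^2$, the bound $|a_3-a_2^2|\le\frac{\alpha}{4}\bigl(2-(2-\alpha)|c_1|^2\bigr)$ from Lemma \ref{lem-1}, minimization of the resulting quadratic in $|c_1|^2$, and the extremal $\omega(z)=z\frac{c-z}{1-cz}$ (your $\omega\equiv0$ for the upper bound works just as well as the paper's $\omega(z)=z^3$). The bookkeeping you left as expected does check out: $\alpha^2+\alpha\beta/2=\alpha^2(\alpha+6)/8$ gives $t^*=\frac{2(\alpha+6)}{15\alpha^2+4\alpha-4}$, so $t^*\ge1$ if and only if $15\alpha^2+2\alpha-16\le0$, and $g(t^*)$ equals the stated closed form $1-\frac14\alpha^2-\frac{\alpha^2(\alpha+6)^2}{4(15\alpha^2+4\alpha-4)}$.
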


\begin{proof}
Since $|a_{3}-a_{2}^{2}|^{2}=(a_{3}-a_{2}^{2})\overline{(a_{3}-a_{2}^{2})}=|a_{3}|^{2}+|a_{2}|^{4}-2\real(a_{2}^{2}\overline{a_{3}})$, then $$2\real(a_{2}^{2}\overline{a_{3}})=|a_{3}|^{2}+|a_{2}|^{4}-|a_{3}-a_{2}^{2}|^{2}.$$
Using this fact, from \eqref{eq17} we get
\begin{equation}\label{eq18}
T_{3,1}(f)=1-|a_{3}-a_{2}^{2}|^{2}+|a_{2}|^{4}-2|a_{2}|^{2}.
\ee
Further, since $|a_{2}|=\alpha|c_{1}|\leq1$, from \eqref{eq18} for the upper bound we have
$$T_{3,1}(f)=1-|a_{3}-a_{2}^{2}|^{2}-|a_{2}|^{2}(2-|a_{2}|^{2})\leq1.$$
Equality is valid for $a_{2}=a_{3}=0$, i.e., for $c_{1}=c_{2}=0 $ and $c_{3}=1$. It means that the extremal function is given by \eqref{eq4} with $\omega (z)=z^{3}$.

\medskip

For the lower bound, from \eqref{eq5} we obtain
\[
\begin{split}
|a_{3}-a_{2}^{2}|&=\left|\frac{1}{2}\alpha c_{2}-\frac{1}{4}\alpha^{2}c_{1}^{2}\right|\\
&\leq \frac{\alpha}{4}(2|c_{2}|+\alpha |c_{1}|^{2})\\
&\leq \frac{\alpha}{4}(2(1-|c_{1}|^{2})+\alpha |c_{1}|^{2})\\
&=\frac{\alpha}{4}(2-(2-\alpha)|c_{1}|^{2}).
\end{split}
\]
Using this and \eqref{eq18}, we have
\[
\begin{split}
T_{3,1}(f)&\geq 1-\frac{\alpha^{2}}{16}\left[2-(2-\alpha)|c_{1}|^{2}\right]^{2}+\alpha^{4}|c_{1}|^{4}-2\alpha^{2}|c_{1}|^{2}\\
&=1-\frac{\alpha^{2}}{4}-\frac{\alpha^{2}(\alpha+6)}{4}|c_{1}|^{2}+\frac{\alpha^{2}}{16}(15\alpha^{2}
+4\alpha-4)|c_{1}|^{4}.
\end{split}
\]
In the case $15\alpha^{2}+4\alpha-4\leq0 $, i.e., for $0<\alpha\leq \frac{2}{5}=0.4 $, and since $0 \leq|c_{1}|\leq 1$, we have
$$T_{3,1}(f)\geq 1-\frac{\alpha^{2}}{4}-\frac{\alpha^{2}(\alpha+6)}{4}+\frac{\alpha^{2}}{16}(15\alpha^{2}
+4\alpha-4)=1-2\alpha^{2}+\frac{15}{16}\alpha^{4}.$$
In case when $15\alpha^{2}+4\alpha-4>0 $, i.e., for $\frac{2}{5}<\alpha \leq 1$, we again receive an expression of $|c_{1}|^{2}$ and the lower bound from the statement of the theorem easily follows if we consider two cases: $\frac{2}{5}<\alpha \leq
\alpha_{H}=\frac{-1+\sqrt{241}}{15}$ and $\alpha_{H}\leq\alpha\leq1$.
The extremal function for the lower bound, for the first case is defined by $\omega(z)=z$ in \eqref{eq4}, i.e., the same as in the case  $ 0<\alpha\leq \frac{2}{5}$, and in the second case for $\omega(z)=z\frac{c_{1}-z}{1-c_{1}z}$, where $c_{1}=\sqrt{\frac{2(\alpha+6)}{15\alpha^{2}+4\alpha-4}} .$
\end{proof}

\medskip

In the last theorem we give the sharp estimates of the generalized Fekete-Szego functional defined
for $f\in\mathcal{S}$ in \cite{LP2024} (see also \cite{TMN}) by:
\begin{equation}\label{eq19}
F_{\lambda, \mu}(f):=|a_{3}-\lambda a_{2}^{2}|-\mu |a_{2}|,\quad \lambda \in \mathbb{C},\,\mu>0.
\ee

\medskip

\begin{theorem}\label{th5} Let $0< \alpha \leq 1$,  $\lambda \in \mathbb{C}$ and $\mu>0$. If $f\in \mathcal{S}_{ex}^{\star}$ is given by \eqref{eq1}, then
$$B_{L}\leq \left|a_{3}-\lambda a_{2}^{2}\right|-\mu |a_{2}|\leq B_{U},$$
where
$$ {B}_{U}=
\left\{
  \begin{array}{ll}
\frac{\alpha}{2},& \alpha |3-4\lambda|< 2+4\mu, \\
\frac{\alpha^{2}|3-4\lambda|-4\alpha\mu}{4}, &\alpha |3-4\lambda|\geq 2+4\mu,
\end{array}
\right. $$
and
$$ {B}_{L}=
\left\{
  \begin{array}{ll}
-\frac{\alpha}{4}(4\mu-\alpha|3-4\lambda|),& \alpha |3-4\lambda|\leq 2\mu-2 \\[2mm]
-\alpha \mu \sqrt{\frac{2}{\alpha |3-4\lambda|+2}},& \alpha |3-4\lambda|\geq 2\mu^{2}-2\\[2mm]
-\frac{\alpha}{2}-\frac{\alpha \mu^{2}}{\alpha |3-4\lambda|+2}, & 2\mu-2<\alpha |3-4\lambda|<2\mu^{2}-2 ,
\end{array}
\right. $$
whenever the corresponding region is nonempty. The given bounds are sharp.
\end{theorem}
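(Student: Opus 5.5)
From \eqref{eq5} we have $a_2=\alpha c_1$ and $a_3-\lambda a_2^2=\frac{\alpha}{4}\bigl(2c_2+(3-4\lambda)\alpha c_1^2\bigr)$. Put $x=|c_1|\in[0,1]$ and $k=\alpha|3-4\lambda|$. Then
\[
F_{\lambda,\mu}(f)=\frac{\alpha}{4}\Bigl(\bigl|2c_2+(3-4\lambda)\alpha c_1^2\bigr|-4\mu x\Bigr).
\]
The plan is to copy the proof of Theorem \ref{th2}, which is the special case $\lambda=0$, $\mu=1$ after rescaling. We replace the coefficient $3\alpha$ there by $k$ and the weight $4$ by $4\mu$, and use only Lemma \ref{lem-1}, namely $|c_2|\le 1-x^2$.

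\emph{Upper bound.} The triangle inequality and Lemma \ref{lem-1} give
\[
F\le \frac{\alpha}{4}\bigl(2(1-x^2)+kx^2-4\mu x\bigr)=\frac{\alpha}{4}g(x),\qquad g(x)=2+(k-2)x^2-4\mu x.
\]
The function $g$ is a quadratic in $x$ with $g(0)=2$ and $g(1)=k-4\mu$. If $k\le 2$, then $g$ is concave or linear with negative slope at $0$, so the maximum is $g(0)=2$. If $k>2$, then $g$ is convex, so the maximum over $[0,1]$ is $\max\{g(0),g(1)\}$. Comparing $2$ with $k-4\mu$ gives the threshold $k=2+4\mu$, and this yields $B_U$ (note that $\frac{\alpha}{4}(k-4\mu)$ matches the displayed formula).

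Sharpness: for the value $\alpha/2$ take $\omega(z)=z^2$. For the other value take $\omega(z)=e^{i\theta}z$, with $\theta$ chosen so that $c_1^2$ has the argument that aligns $(3-4\lambda)c_1^2$ with the remaining terms. In that case $c_2=0$, so any $\theta$ gives the same modulus.

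\emph{Lower bound.} As in \eqref{eq13}, use $|2c_2+(3-4\lambda)\alpha c_1^2|\ge \bigl|2|c_2|-kx^2\bigr|$, and minimize over the admissible pairs $0\le |c_2|\le 1-x^2$. The minimum of $|2|c_2|-kx^2|$ is $0$ when $kx^2\le 2(1-x^2)$, that is, when $x\le x_0:=\sqrt{2/(k+2)}$. Otherwise it equals $kx^2-2(1-x^2)=(k+2)x^2-2$. So we must minimize the function
\[
h(x)=\begin{cases}-4\mu x, & 0\le x\le x_0,\\ (k+2)x^2-2-4\mu x, & x_0\le x\le 1.\end{cases}
\]
The first piece is decreasing, with value $-4\mu x_0$ at $x_0$. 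The second piece is a convex parabola with vertex at $x^*=2\mu/(k+2)$. Three cases follow:
\begin{itemize}
\item If $x^*\le x_0$, i.e.\ $k\ge 2\mu^2-2$, the minimum is $-4\mu x_0$, giving $-\alpha\mu\sqrt{2/(k+2)}$.
\item If $x^*\ge 1$, i.e.\ $k\le 2\mu-2$, the minimum is $h(1)=k-4\mu$, giving $-\frac{\alpha}{4}(4\mu-k)$.
\item In the intermediate case the minimum is $h(x^*)=-2-\frac{4\mu^2}{k+2}$, giving $-\frac{\alpha}{2}-\frac{\alpha\mu^2}{k+2}$.
\end{itemize}
Each matches the corresponding line of $B_L$.

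\emph{Sharpness of the lower bound.} This is the main obstacle: we must make every inequality used become an equality simultaneously. We need $|c_2|=1-x^2$ with $2c_2$ pointing opposite to $(3-4\lambda)\alpha c_1^2$. Take
\[
\omega(z)=z\,\frac{c_1+\varepsilon z}{1+\bar c_1\varepsilon z},\qquad |\varepsilon|=1,
\]
where $|c_1|=x$ is the minimizing value ($x_0$, $1$, or $x^*$ respectively). This $\omega$ has $c_2=\varepsilon(1-x^2)$. Choose $\varepsilon$ and $\arg c_1$ so that $\varepsilon$ is antiparallel to $(3-4\lambda)c_1^2$. This is possible because the phase of $\varepsilon$ is free. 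For the case $x=1$, use instead $\omega(z)=e^{i\theta}z$. I would finally check that in the case $x=x_0$ the two terms cancel exactly, since $2(1-x_0^2)=kx_0^2$, which confirms the extremal function as in Theorem \ref{th2}.
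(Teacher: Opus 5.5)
Your proposal is correct and follows essentially the paper's argument: the same reduction to $\frac{\alpha}{4}\bigl(|2c_2+(3-4\lambda)\alpha c_1^2|-4\mu|c_1|\bigr)$, the same use of $|c_2|\le 1-|c_1|^2$ and the reverse triangle inequality, and the same three-case minimization with thresholds $x_0=\sqrt{2/(k+2)}$ and $x^*=2\mu/(k+2)$. Your extremal functions $z\frac{c_1+\varepsilon z}{1+\bar c_1\varepsilon z}$ contain the paper's $e^{-i\beta}z\frac{|c_1|_0-z}{1-|c_1|_0z}$ as the choice $c_1=e^{-i\beta}x_0$, $\varepsilon=-e^{-i\beta}$; the only differences are cosmetic: you get the upper bound from convexity or concavity of $2+(k-2)x^2-4\mu x$ rather than the paper's step $-4\mu|c_1|\le-4\mu|c_1|^2$ (both reduce to comparing $2$ with $k-4\mu$), and your piecewise envelope $h$ makes the lower-bound case split somewhat tidier than the paper's.
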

\begin{proof}
From \eqref{eq19} and \eqref{eq5}, having in mind that $|c_{1}|\leq1$ implies $|c_{1}|^{2}\leq |c_{1}|$, after some calculations for the upper bound we get
\[
\begin{split}
F_{\lambda, \mu}(f)&=\frac{\alpha}{4}\left[|2c_{2}+(3-4\lambda)\alpha c_{1}^{2}|-4\mu |c_{1}|\right]\\
&\leq\frac{\alpha}{4}\left[2(1-|c_{1}|^{2})+\alpha|3-4\lambda||c_{1}|^{2}-4\mu|c_{1}|^{2}\right] \\
&=\frac{\alpha}{4}\left[2+ (\alpha |3-4\lambda|-4\mu -2)|c_{1}|^{2}\right]
\end{split}
\]
and the result depends of the sign of $\alpha |3-4\lambda|-4\mu -2$ and easily follows.

\medskip

Equality holds for the function $f$ defined by \eqref{eq4}, in the first case when $c_{1}=0$ and $|c_{2}|=1$, i.e., for $\omega(z)=z^{2}$, and in the second case when  $\omega(z)=z$.

\medskip

For the lower bound we use once again that $|a\pm b|\geq ||a|-|b||$, where $a,b$ are complex numbers.
Since, previously we obtained
$$F_{\lambda, \mu}(f)=\frac{\alpha}{4}\left(|2c_{2}+\alpha (3-4\lambda)c_{1}^{2}|-4\mu |c_{1}|\right),$$
now we have
\begin{equation}\label{eq20}
F_{\lambda, \mu}(f) \ge \frac{\alpha}{4}\left(\left| 2|c_{2}|-\alpha |3-4\lambda||c_{1}|^{2}\right|
-4\mu |c_{1}|\right).
\ee
If $2|c_{2}|-\alpha |3-4\lambda||c_{1}|^{2}\geq0$, then
$$2(1-|c_{1}|^{2})\geq 2|c_{2}|\geq \alpha |3-4\lambda||c_{1}|^{2}, $$
i.e.
$$ |c_{1}|\leq \sqrt{\frac{2}{\alpha |3-4\lambda|+2}}:=|c_{1}|_{0}.$$
So, for $0\leq|c_{1}|\leq|c_{1}|_{0}$ and from \eqref{eq20}, we obtain
$$F_{\lambda, \mu}(f)\geq -\alpha\mu|c_{1}|_{0}=-\alpha\mu \sqrt{\frac{2}{\alpha |3-4\lambda|+2}}.$$
Now, let $|c_{1}|_{0} \leq|c_{1}|\leq 1$. In the second case, from \eqref{eq20}, we have
$$F_{\lambda, \mu}(f)\geq \frac{\alpha}{4}\left(\alpha |3-4\lambda||c_{1}|^{2}-2|c_{2}|
-4\mu |c_{1}|\right) , $$
or, since $|c_{2}|\leq 1-|c_{}|^{2}$,
\[F_{\lambda, \mu}(f)\geq\frac{\alpha}{4}\left[(\alpha |3-4\lambda|+2)|c_{1}|^{2} -4\mu |c_{1}|-2\right].\]
Now we need to find minimum of the the function
\begin{equation}\label{eq22}
G_{\lambda, \mu}(|c_1|):=(\alpha |3-4\lambda|+2)|c_{1}|^{2}-4\mu |c_{1}|-2,\quad 0\leq|c_{1}|\leq1.
\ee
If  $|c_{1}|_{1}:=\frac{2\mu}{\alpha |3-4\lambda|+2}\leq1 $, i.e., if $\alpha |3-4\lambda|\geq 2\mu-2$, then $G_{\lambda, \mu}(|c_1|)$ has its minimum equal to
$$-\frac{4\mu^{2}}{\alpha |3-4\lambda|+2}-2.$$
But, necessary to be $|c_{1}|_{1}\geq|c_{1}|_{0}$, which is equivalent to $\alpha |3-4\lambda|\leq 2\mu^{2}-2.$
In  that sense if we combine these consideration with \eqref{eq22}, we obtain the two last results for $B_{L}$.

\medskip

For the sharpness, the extremal function $f$ is given by \eqref{eq4}, in the first case when $\omega(z)=z $ in \eqref{eq4}, and in the in two last cases, when
$\omega(z)=e^{-i\beta} z\frac{|c_{1}|_{0}-z}{1-|c_{1}|_{0}z}$ and $\omega(z)=e^{-i\beta}z\frac{|c_{1}|_{1}-z}{1-|c_{1}|_{1}z}$, where $\beta=\arg (3-4\lambda)$.

\medskip

Namely, we can suppose that $3-4\lambda \neq0$, which implies $3-4\lambda = |3-4\lambda|e^{i\beta}$. The equality in $|a+b|\ge ||a|-|b||$ is valid if $a$ and $b$ have opposite arguments. In our case $a=2c_2$ and $b=\alpha(3-4\lambda)c_1^2$. For example, if $\omega(z)=e^{-i\beta} z\frac{|c_{1}|_{0}-z}{1-|c_{1}|_{0}z} = e^{-i\beta}|c_1|_0 z - e^{-i\beta}(1-|c_1|_0^2)z^2+\cdots $, then $a=-2e^{-i\beta}(1-|c_1|_0^2)$, $b = \alpha|3-4\lambda| e^{i\beta} e^{-2i\beta}|c_1|_0^2 = \alpha|3-4\lambda| e^{-i\beta}|c_1|_0^2$, which implies the sharpness.
\end{proof}

\end{document}